\documentclass[12pt]{amsart}
\usepackage{amssymb}

\newtheorem{thm}{Theorem}[section]
\newtheorem{prp}{Proposition}[section]
\newtheorem{lem}{Lemma}[section]
\newtheorem{cor}{Corollary}[section]
\begin{document}
\author{G\'erard Endimioni}
\address{C.M.I-Universit\'{e} de Provence\\
39, rue F. Joliot-Curie, F-13453 Marseille Cedex 13}
\email{endimion@gyptis.univ-mrs.fr}
\title[Polynomial automorphisms]
{On the polynomial automorphisms of a group}
\subjclass{20F28, 20F16, 20F18}
\keywords{polynomial automorphism, metabelian group, nilpotent group, 
IA-automorphism.}
%
\begin{abstract} Let ${\rm A}(G)$ denote the automorphism group of a 
group $G$. A polynomial automorphism of $G$ is an automorphism of the form
$x\mapsto (v_{1}^{-1}x^{\epsilon_{1}}v_{1})\ldots
(v_{m}^{-1}x^{\epsilon_{m}}v_{m})$. We prove that if $G$ is nilpotent
(resp. metabelian), then so is the subgroup of ${\rm A}(G)$
generated by all polynomial automorphisms. 
\end{abstract}
\maketitle
%
\section{Introduction and main results}
Let $G$ be a group. We shall write ${\rm A}(G)$ for the 
automorphism group of $G$. According to Schweigert \cite{SC}, 
we say that an element $f\in {\rm A}(G)$ is a {\em polynomial automorphism} of $G$
if there exist integers $\epsilon_{1},\ldots,\epsilon_{m}\in{\mathbb Z}$ 
and elements $u_{0},\ldots,u_{m}\in G$ such that
$$f(x)=u_{0}x^{\epsilon_{1}}u_{1}\ldots 
u_{m-1}x^{\epsilon_{m}}u_{m}$$ 
for all $x\in G$. Since $f(1)=1$, it 
is easy to see that $f(x)$ can be expressed as a `product' of inner 
automorphisms, that is
$$f(x)=(v_{1}^{-1}x^{\epsilon_{1}}v_{1})\ldots
(v_{m}^{-1}x^{\epsilon_{m}}v_{m}).$$
We shall write ${\rm P}_{0}(G)$ for the set of polynomial automorphisms of $G$.
Actually, Schweigert defines a polynomial automorphism in the context 
of finite groups. In particular, in this context, the set
${\rm P}_{0}(G)$ is clearly a subgroup of ${\rm A}(G)$. On the other hand, this 
is not necessarily the case when $G$ is 
infinite. For instance, in the additive group of rational numbers, 
the set of polynomial automorphisms forms a monoid with respect to 
the operation of functional composition, which is isomorphic to
the multiplicative monoid ${\mathbb Z}\setminus\{ 0\}$. 

In this paper,
we shall consider the subgroup 
${\rm P}(G)=\langle {\rm P}_{0}(G)\rangle$ of ${\rm A}(G)$, generated by all  
polynomial automorphisms of $G$. 
Hence ${\rm P}_{0}(G)={\rm P}(G)$ when $G$ is finite, but for example
${\rm P}(G)$ is distinct from ${\rm P}_{0}(G)$ when $G$ is
the additive group of rational numbers (note that ${\rm P}(G)={\rm A}(G)$
in this last case). 

It is easy to verify that ${\rm P}_{0}(G)$ is a normal subset of ${\rm A}(G)$.
Thus ${\rm P}(G)$ is a normal subgroup of ${\rm A}(G)$; in addition, 
we have 
$${\rm I}(G)\unlhd {\rm P}(G)\unlhd {\rm A}(G),$$ 
where ${\rm I}(G)$ is the group of inner automorphisms of $G$.
Also ${\rm P}(G)$ contains the group of invertible elements of the 
monoid ${\rm P}_{0}(G)$. It is worth noting that there exist finite groups 
$G$ such that the quotient ${\rm P}(G)/{\rm I}(G)$ is not soluble \cite{KO}.

If $G$ is abelian, each polynomial automorphism is of the form
$x\mapsto x^{\epsilon}$, and so ${\rm P}(G)$ is abelian.
When $G$ is a finite nilpotent group of class $k\geq 2$, it is proved 
in \cite{CR} that ${\rm P}(G)$ is nilpotent of class $k-1$
(see also \cite[Satz 3.5]{SC}).
We show here that this result remains true when $G$ is infinite.
\begin{thm} Let $G$ be a nilpotent group of class $k\geq 2$.
Then ${\rm P}(G)$ is nilpotent of class $k-1$.
\end{thm}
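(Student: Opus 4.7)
The plan is to induct on the nilpotency class $k$ of $G$.

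For the base case $k=2$, the identity $v^{-1}x^{\epsilon}v=x^{\epsilon}[x,v]^{\epsilon}$ combined with centrality of $[G,G]$ and bilinearity of the commutator lets us rewrite every polynomial automorphism as $f(x)=x^{n_f}[x,w_f]$ for some $n_f\in\mathbb{Z}$ and $w_f\in G$. A direct calculation shows that the composition of two such maps is again of this form, with exponent $n_{f_1}n_{f_2}$, and is symmetric in $f_1,f_2$; hence ${\rm P}_0(G)$, and therefore ${\rm P}(G)$, is abelian, which is nilpotency class $1=k-1$.

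For the inductive step, assume the theorem in every class smaller than $k\geq 3$. The key structural input is that each $g\in{\rm P}(G)$ acts on $\gamma_i(G)/\gamma_{i+1}(G)$ as multiplication by $n(g)^i$, where $n(g)$ is the scalar by which $g$ acts on $G/[G,G]$; this follows from multilinearity of iterated commutators modulo $\gamma_{i+1}(G)$. Setting $A_i=\{f\in{\rm A}(G):f(x)x^{-1}\in\gamma_i(G)\text{ for all }x\in G\}$, the natural embedding of $A_i/A_{i+1}$ into $\mathrm{Hom}(G/[G,G],\gamma_i(G)/\gamma_{i+1}(G))$ identifies the conjugation action of $g\in{\rm P}(G)$ on this quotient with scalar multiplication by $n(g)^{i-1}$. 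Because $G/\gamma_k(G)$ has class $k-1$ and the image of ${\rm P}(G)$ in ${\rm A}(G/\gamma_k(G))$ sits inside ${\rm P}(G/\gamma_k(G))$, the induction hypothesis yields $\gamma_{k-1}({\rm P}(G))\subseteq A_k$.

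The final step, which I expect to be the main obstacle, is to show $[\gamma_{k-1}({\rm P}(G)),{\rm P}(G)]=1$. For any $h\in A_k$ the map $c_h(x)=h(x)x^{-1}$ is a homomorphism from $G/[G,G]$ into $\gamma_k(G)$, which lies in $Z(G)$, and since $\gamma_{k+1}(G)=1$ the conjugation formula above becomes exact in the abelian group $A_k$: for every $g\in{\rm P}(G)$, $ghg^{-1}=h^{n(g)^{k-1}}$, so $[g,h]=h^{n(g)^{k-1}-1}$. It therefore suffices to establish, for $h\in\gamma_{k-1}({\rm P}(G))$ and every $g\in{\rm P}(G)$, the annihilation $(n(g)^{k-1}-1)c_h=0$. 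My plan is to unfold $h$ as a product of commutators $[u,g']$ with $u\in\gamma_{k-2}({\rm P}(G))\subseteq A_{k-1}$ and $g'\in{\rm P}(G)$; the analogous conjugation identity one level lower gives $h\equiv u^{1-n(g')^{k-2}}\pmod{A_k}$, and the forced membership $h\in A_k$ already implies $(n(g')^{k-2}-1)\bar c_u=0$ in $\gamma_{k-1}(G)/\gamma_k(G)$. Iterating this reduction down to ${\rm P}(G)'\subseteq A_3$ (obtained by applying the base case to $G/\gamma_3(G)$) should accumulate the polynomial relations in the various $n(\cdot)$ needed to produce the desired annihilation.
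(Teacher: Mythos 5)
Your base case and your reduction $\gamma_{k-1}({\rm P}(G))\subseteq A_k$ are both sound, but the argument stops exactly where you predict: the annihilation $(n(g)^{k-1}-1)c_h=0$ is left as a goal, and the iteration you sketch for it does not close up. Unfolding $h$ as a product of commutators $[u,g']$ yields at best relations of the form $(n(g')^{k-2}-1)\bar c_u=0$ in $\gamma_{k-1}(G)/\gamma_k(G)$ --- relations that involve only the particular automorphisms $g'$ occurring in that decomposition and that live in the wrong graded piece --- whereas you need a statement in $\gamma_k(G)$ quantified over \emph{every} $g\in{\rm P}(G)$; no mechanism is offered for converting the former into the latter, and the single constraint forced by $h\in A_k$ bounds the product, not the individual factors. (A secondary soft spot: for $g\notin{\rm P}_0(G)$, e.g.\ $g=f^{-1}$ with $f$ polynomial, the ``scalar'' $n(g)$ is the inverse of a power map on $G/[G,G]$ and need not be an integer, so $h^{n(g)^{k-1}}$ already requires interpretation.) So the key step is genuinely missing.

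The gap closes once you notice a fact that the graded bookkeeping hides: a polynomial automorphism $f(x)=(v_1^{-1}x^{\epsilon_1}v_1)\cdots(v_m^{-1}x^{\epsilon_m}v_m)$ acts on any \emph{central} element $z$ by $z\mapsto z^{\epsilon}$ with $\epsilon=\epsilon_1+\cdots+\epsilon_m$ --- the first power of the exponent sum, read off directly from the defining formula, not the $k$-th power predicted by multilinearity (the two are consistent only because $z^{\epsilon^{k}}=z^{\epsilon}$ holds identically on $\gamma_k(G)$). Given this, take any $h\in{\rm P}(G)$ with $w(x)=x^{-1}h(x)\in\zeta(G)$ for all $x$; your $A_k\cap{\rm P}(G)$ qualifies since $\gamma_k(G)\le\zeta(G)$. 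Then $w$ is a homomorphism of $G$ into the centre, so $f(h(x))=f(xw(x))=f(x)\,w(x)^{\epsilon}$, while $h(f(x))=f(x)\,w(f(x))=f(x)\prod_i w(v_i)^{-1}w(x)^{\epsilon_i}w(v_i)=f(x)\,w(x)^{\epsilon}$. Hence $h$ commutes with every polynomial automorphism and therefore with all of ${\rm P}(G)$, and no congruence on $n(g)^{k-1}$ is ever needed. This two-line computation is precisely the paper's inductive step (carried out with $G/\zeta(G)$ in place of $G/\gamma_k(G)$, an inessential difference); your lower-central-series apparatus, correct as far as it goes, manufactures an obstruction that it then cannot remove.
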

Notice that conversely, if ${\rm P}(G)$ is nilpotent, then so is $G$
since ${\rm P}(G)$ contains the group of inner automorphisms.

When $G$ is metabelian, it seems that nothing is known about 
${\rm P}(G)$, even in the context of finite groups. In this paper, we 
shall prove the following.
\begin{thm}  Let $G$ be a metabelian group.
Then the group  ${\rm P}(G)$ is itself metabelian.
\end{thm}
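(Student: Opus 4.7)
\medskip\noindent\textbf{Proof plan.} Write $A=[G,G]$, which is abelian by hypothesis. The plan is to analyse the actions of ${\rm P}(G)$ on the two abelian groups $G/A$ and $A$ separately, show each action has abelian image in the corresponding automorphism group, and then combine the two vanishings to conclude that $[{\rm P}(G),{\rm P}(G)]$ is abelian.

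For a polynomial automorphism $f(x)=\prod_{i=1}^{m}(v_i^{-1}x^{\epsilon_i}v_i)$, I would first pass to $G/A$: the conjugations collapse and $f$ induces the power map $\bar{x}\mapsto\bar{x}^{\epsilon}$ with $\epsilon=\sum_i\epsilon_i$. Power maps on the abelian group $G/A$ commute under composition, so the induced homomorphism ${\rm P}(G)\to{\rm Aut}(G/A)$ has abelian image. Next, because $A$ is abelian, the conjugation action of $G$ on $A$ factors through $G/A$, making $A$ a module over the commutative ring $R=\mathbb{Z}[G/A]$; and because $A$ is abelian the factors $v_i^{-1}a^{\epsilon_i}v_i$ all commute, so for $a\in A$ one gets $f(a)=\sigma(f)\cdot a$, where $\sigma(f)=\sum_i\epsilon_i\bar{v}_i\in R$. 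Composition of polynomial automorphisms corresponds to multiplication of the $\sigma$'s in $R$, so the restriction map ${\rm P}(G)\to{\rm Aut}(A)$ lands in a commutative subgroup of ${\rm Aut}(A)$, namely the units of the image of $R$ in ${\rm End}(A)$.

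Combining these two facts, every $g\in[{\rm P}(G),{\rm P}(G)]$ acts trivially on $G/A$ (so $g(x)=x\alpha(x)$ for some $\alpha(x)\in A$) and fixes $A$ pointwise. A direct computation then closes the argument: for two such elements $g_1,g_2$, one has $g_1g_2(x)=g_1(x)\cdot g_1(\alpha_2(x))=x\alpha_1(x)\alpha_2(x)$, using that $g_1$ fixes $\alpha_2(x)\in A$; by symmetry $g_2g_1(x)=x\alpha_2(x)\alpha_1(x)$, and these agree because $A$ is abelian. Hence $[{\rm P}(G),{\rm P}(G)]$ is abelian, which is what we want.

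I do not foresee a serious obstacle. The main point requiring care is verifying that $f\mapsto\sigma(f)$ is genuinely multiplicative under composition, so that the action of ${\rm P}(G)$ on $A$ legitimately factors through the commutative ring $R$; this is a short calculation using that $A$ is an $R$-module. Once this is in place, the whole argument is driven by the single observation that metabelianness of $G$ lets one linearise the action on $A$ via a commutative coefficient ring.
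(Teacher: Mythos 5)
Your proposal is correct and follows essentially the same route as the paper: its key Lemma 2.3 establishes exactly your two facts (every element of $[{\rm P}(G),{\rm P}(G)]$ fixes $[G,G]$ pointwise and acts trivially on $G/[G,G]$), and your final computation showing two such elements commute is the paper's verbatim. The only presentational difference is that where you invoke the commutative ring $\mathbb{Z}[G/A]$ to see that polynomial automorphisms commute on $A=[G,G]$, the paper derives the same commutativity from an explicit double-product formula (Lemma 2.1) together with the metabelian identity $[t,x,y]=[t,y,x]$ (Lemma 2.2), which is precisely the commutativity of that group ring in disguise.
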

In Section 3, we shall interpret a result of C. K. Gupta as a very 
particular case of this theorem (see Corollary 3.1 below).
%
\section{Proofs}
As usual, in a group $G$, the commutator of two elements $x,y$ is 
defined by $[x,y]=x^{-1}y^{-1}xy$. Instead of $[[x,y],z]$, we shall 
write $[x,y,z]$. We denote by $[G,G]$ the derived subgroup of $G$.

\begin{lem} Let $f,g$ be two
functions over a group $G$, respectively defined by the relations
\begin{eqnarray*}
	f(x) & = & (v_{1}^{-1}x^{\epsilon_{1}}v_{1})\ldots
     (v_{m}^{-1}x^{\epsilon_{m}}v_{m}),  \\
	g(x) & = & (w_{1}^{-1}x^{\eta_{1}}w_{1})\ldots
(w_{n}^{-1}x^{\eta_{n}}w_{n})
\end{eqnarray*}
(we do not suppose that $f$ and $g$ are automorphisms).
Let $t$ be an element of $G$ such that any two conjugates of $t$ 
commute. Then we have the relation
$$f(g(t))=\prod_{i=1}^{m}\prod_{j=1}^{n}t^{\epsilon_{i}\eta_{j}}
[t^{\epsilon_{i}\eta_{j}},v_{i}]
[t^{\epsilon_{i}\eta_{j}},w_{j}]
[t^{\epsilon_{i}\eta_{j}},w_{j},v_{i}]$$
(notice that in this product, the order of the factors is of no 
consequence).
\end{lem}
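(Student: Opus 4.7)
The plan is to reduce the whole computation to an abelian subgroup where we can freely reorder factors, and then expand a single conjugate using two standard commutator identities.

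First I would observe that the hypothesis that any two conjugates of $t$ commute is equivalent to saying that the normal closure $N=\langle t^{G}\rangle$ is abelian. Every element of the form $t^{n}$, and every conjugate $u^{-1}t^{n}u$, lies in $N$; and since commutators $[a,b]$ with $a\in N$ automatically lie in $N$ (because $b^{-1}ab\in N$), also $[t^{n},u]$ and iterated commutators like $[t^{n},u,v]$ are all in $N$. Thus every factor that will appear in the final expression lies in the abelian group $N$, which justifies the parenthetical remark that the order of the factors in the double product is irrelevant.

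Next I would compute $f(g(t))$ in two steps. Since each $w_{j}^{-1}t^{\eta_{j}}w_{j}\in N$ and $N$ is abelian, $g(t)$ is a product of pairwise commuting elements, and for any integer $\epsilon_{i}$ one gets
$$g(t)^{\epsilon_{i}}=\prod_{j=1}^{n}w_{j}^{-1}t^{\epsilon_{i}\eta_{j}}w_{j}.$$
Conjugating by $v_{i}$ and then taking the product over $i$ yields
$$f(g(t))=\prod_{i=1}^{m}\prod_{j=1}^{n}v_{i}^{-1}w_{j}^{-1}t^{\epsilon_{i}\eta_{j}}w_{j}v_{i},$$
again with freedom to reorder since all factors are in $N$.

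It remains to expand the generic factor $a^{w_{j}v_{i}}$ where $a=t^{\epsilon_{i}\eta_{j}}$. Using the identity $x^{y}=x[x,y]$ twice, I get $a^{w_{j}}=a[a,w_{j}]$, and then
$$a^{w_{j}v_{i}}=(a[a,w_{j}])^{v_{i}}=a^{v_{i}}\,[a,w_{j}]^{v_{i}}=a[a,v_{i}]\cdot[a,w_{j}][a,w_{j},v_{i}],$$
which after reordering inside the abelian group $N$ is precisely the factor in the statement. Multiplying these over all $(i,j)$ gives the claimed formula.

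The only delicate point is the bookkeeping between the abelian reorderings and the non-commutative commutator expansion; no deep obstacle appears, since once $N$ is recognized as abelian the identity $x^{y}=x[x,y]$ does all the work.
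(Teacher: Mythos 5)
Your proposal is correct and follows essentially the same route as the paper: both reduce $f(g(t))$ to the double product $\prod_{i}\prod_{j}v_{i}^{-1}w_{j}^{-1}t^{\epsilon_{i}\eta_{j}}w_{j}v_{i}$ using commutativity of the conjugates of $t$, and then expand each factor; your iterated use of $x^{y}=x[x,y]$ is just an in-line derivation of the identity $[x,yz]=[x,z][x,y][x,y,z]$ that the paper invokes directly. Your preliminary remark that the normal closure of $t$ is abelian is a clean way to justify the reorderings, though in fact the final expansion already produces the factors in the stated order.
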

\begin{proof} Using the fact that any two conjugates of $t$ 
commute, we can write
\begin{eqnarray*}
	f(g(t)) & = & \prod_{i=1}^{m}v_{i}^{-1}\left(\prod_{j=1}^{n}
	w_{j}^{-1}t^{\eta_{j}}w_{j}\right)^{\epsilon_{i}}v_{i}\\
	{} & = & \prod_{i=1}^{m}\prod_{j=1}^{n}v_{i}^{-1}w_{j}^{-1}
	t^{\epsilon_{i}\eta_{j}}w_{j}v_{i}   \\
	{} & = & \prod_{i=1}^{m}\prod_{j=1}^{n}t^{\epsilon_{i}\eta_{j}}
	[t^{\epsilon_{i}\eta_{j}},w_{j}v_{i}].
\end{eqnarray*}
We conclude thanks to the relation 
$[x,yz]=[x,z][x,y][x,y,z]$.
\end{proof}
In a nilpotent group $G$ of class $\leq 2$, two conjugates of any 
element $t\in G$ commute. Therefore, as an immediate consequence of 
Lemma 2.1, we observe that any two polynomial automorphisms of $G$ 
commute. Since these automorphisms generate ${\rm P}(G)$, we obtain: 
%
\begin{cor} If $G$ is a nilpotent group of class $\leq 2$, then
${\rm P}(G)$ is abelian.
\end{cor}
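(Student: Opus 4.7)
The plan is to reduce the statement to Lemma 2.1. The first step is to verify the hypothesis of that lemma for every $t\in G$: since $G$ is nilpotent of class $\leq 2$, the derived subgroup $[G,G]$ lies in the centre $Z(G)$, so every conjugate $g^{-1}tg$ differs from $t$ by a central commutator. A direct computation then shows that any two such conjugates commute, so Lemma 2.1 is applicable to every element of $G$.

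With the hypothesis verified, I would take two arbitrary polynomial automorphisms $f,g\in {\rm P}_{0}(G)$, written in the standard form of Lemma 2.1, and compute $f(g(t))$ via the formula of that lemma. Because $G$ has class at most $2$, every triple commutator $[t^{\epsilon_{i}\eta_{j}},w_{j},v_{i}]$ is trivial, so the formula simplifies to
$$f(g(t))=\prod_{i=1}^{m}\prod_{j=1}^{n}t^{\epsilon_{i}\eta_{j}}\,[t^{\epsilon_{i}\eta_{j}},v_{i}]\,[t^{\epsilon_{i}\eta_{j}},w_{j}].$$
The right-hand side is manifestly symmetric under the exchange of the data $(f,v,\epsilon)$ with $(g,w,\eta)$, and the order of the factors is immaterial as stated in the lemma; hence applying Lemma 2.1 to $g(f(t))$ yields exactly the same product. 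Consequently $f(g(t))=g(f(t))$ for every $t\in G$, so $f$ and $g$ commute in ${\rm A}(G)$.

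Since ${\rm P}_{0}(G)$ generates ${\rm P}(G)$ and any two of its elements commute, ${\rm P}(G)$ is abelian. I do not anticipate any real obstacle; the only step that requires a brief verification is the commutativity of two conjugates of a single element in class $\leq 2$, which follows at once from the centrality of $[G,G]$.
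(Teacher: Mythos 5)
Your proof is correct and follows essentially the same route as the paper: verify that any two conjugates of an element commute in class $\leq 2$, apply Lemma 2.1, observe that the triple commutators vanish and the resulting product is symmetric in the data of $f$ and $g$, and conclude that the generators of ${\rm P}(G)$ pairwise commute. The paper states this as an immediate consequence of Lemma 2.1; you have merely written out the same verification in full.
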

We are now ready to prove our first theorem. 
\begin{proof}[Proof of Theorem 1.1.]  Since ${\rm P}(G)$ contains ${\rm I}(G)$ (which is 
nilpotent of class $k-1$ exactly), it suffices to show that ${\rm P}(G)$ is 
nilpotent of class at most $k-1$.
We argue by induction on the nilpotency class $k$ of $G$.
The case $k=2$ follows from Corollary 2.1. Now suppose that our theorem 
is proved for an integer $k\geq 2$ and consider a nilpotent group $G$ of class $k+1$.
Denote by $\zeta(G)$ the centre of $G$. One can define a homomorphism
$\Theta:{\rm P}(G)\to{\rm A}(G/\zeta(G))$, where for each 
$f\in{\rm P}(G)$, $\Theta(f)$ is the automorphism induced by $f$ in 
$G/\zeta(G)$. Clearly, if $f$ is a polynomial automorphism of $G$, then
$\Theta(f)$ is a polynomial automorphism of $G/\zeta(G)$. Hence
$\Theta({\rm P}(G))$ is a subgroup of ${\rm P}(G/\zeta(G))$, and so, 
by induction, is nilpotent of class at most $k-1$. Since 
$\Theta({\rm P}(G))$ and ${\rm P}(G)/\ker\Theta$ are isomorphic, it 
suffices to show that $\ker\Theta$ is included in the centre of ${\rm P}(G)$ 
and the theorem is proved. For that, consider an element 
$g\in\ker\Theta$ and put $w(x)=x^{-1}g(x)$ for any $x$ in $G$.
Thus $g(x)=xw(x)$ and $w(x)$ belongs to $\zeta(G)$ for all $x\in G$.
Notice that $w$ defines a homomorphism of $G$ into $\zeta(G)$ since
$$w(xy)=y^{-1}x^{-1}g(x)g(y)=y^{-1}w(x)g(y)=w(x)w(y).$$
In order to show that $g$ belongs to the centre of ${\rm P}(G)$, it 
suffices to verify that $g$ commutes with any polynomial automorphism $f$ of $G$.
Suppose that $f$ is defined by the relation
$$f(x)=(v_{1}^{-1}x^{\epsilon_{1}}v_{1})\ldots
(v_{m}^{-1}x^{\epsilon_{m}}v_{m}).$$
We have easily
$$f(g(x))=f(xw(x))=f(x)f(w(x))=f(x)w(x)^{\epsilon},$$
where $\epsilon=\epsilon_{1}+\cdots+\epsilon_{m}$. In the same way, 
by using the fact that $w$ is a homomorphism, we can write
\begin{eqnarray*}
	g(f(x)) & = & f(x)w(f(x))  \\
	{} & = & f(x)(w(v_{1})^{-1}w(x)^{\epsilon_{1}}w(v_{1}))\ldots
(w(v_{m})^{-1}w(x)^{\epsilon_{m}}w(v_{m})),
\end{eqnarray*}
whence $g(f(x))=f(x)w(x)^{\epsilon}$.
Thus $g$ and $f$ commute, as required, and the result follows.  
\end{proof}
Now we undertake the proof of our second theorem.
First we need the following result, which is 
well known and easy to prove 
(see for example \cite[Lemma 34.51]{NE} or \cite[ Part 2, p. 64]{RO}).
\begin{lem} In a metabelian group $G$, if $t$ is an element of the 
derived subgroup $[G,G]$, we have the relation $[t,x,y]=[t,y,x]$ for all 
$x,y\in G$. 
\end{lem}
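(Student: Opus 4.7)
The plan is to exploit that the derived subgroup $A=[G,G]$ is abelian (since $G$ is metabelian), so any element of $A$ commutes with every other element of $A$ and, crucially, conjugation by an element of $A$ acts trivially on $A$. Since $t\in A$, every commutator of the form $[t,u]$ again lies in $A$, and this will collapse several terms to $1$.

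The key move is to compute $[t,xy]$ in two different ways. On the one hand, the standard identity $[a,bc]=[a,c]\,[a,b]^{c}$ gives
$$[t,xy]=[t,y]\,[t,x]^{y}=[t,y]\,[t,x]\,[t,x,y].$$
On the other hand, writing $xy=yx\,[x,y]$ and applying the same identity,
$$[t,xy]=[t,yx\,[x,y]]=[t,[x,y]]\,[t,yx]^{[x,y]}.$$
Now $[t,[x,y]]=1$ because both $t$ and $[x,y]$ lie in the abelian subgroup $A$, and $[t,yx]^{[x,y]}=[t,yx]$ because $[t,yx]\in A$ and $[x,y]\in A$. Hence $[t,xy]=[t,yx]$, and expanding the right side symmetrically gives $[t,x]\,[t,y]\,[t,y,x]$.

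Equating the two expansions yields
$$[t,y]\,[t,x]\,[t,x,y]=[t,x]\,[t,y]\,[t,y,x].$$
Since $[t,x],[t,y]\in A$ commute, the leading factors cancel and one reads off $[t,x,y]=[t,y,x]$, as required. The argument is essentially a bookkeeping exercise in standard commutator identities; the only subtle point — which is really the whole content — is to recognize that in the metabelian setting the two seemingly different expansions of $[t,xy]$ must agree, and that every ``correction term'' one might worry about (commutators within $A$, conjugations of $A$ by $A$) is forced to be trivial. There is no genuine obstacle beyond keeping the commutator expansions organized.
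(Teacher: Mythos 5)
Your proof is correct. Note that the paper itself offers no proof of this lemma: it is stated as ``well known and easy to prove'' with references to Neumann and Robinson, so there is no in-paper argument to compare against. Your double expansion of $[t,xy]$ is a complete and standard way to establish it: the identity $[a,bc]=[a,c][a,b]^{c}$ gives $[t,xy]=[t,y][t,x][t,x,y]$ on one hand, and on the other hand writing $xy=yx[x,y]$ yields $[t,xy]=[t,[x,y]]\,[t,yx]^{[x,y]}=[t,yx]=[t,x][t,y][t,y,x]$, where the collapsing of the correction terms uses exactly the hypotheses that $t\in[G,G]$ and that $[G,G]$ is abelian and normal (so $[t,[x,y]]=1$ and conjugation by $[x,y]$ fixes $[t,yx]$). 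The final cancellation is legitimate since $[t,x]$ and $[t,y]$ both lie in the abelian subgroup $[G,G]$ and hence commute. Every step checks out against the paper's commutator convention $[x,y]=x^{-1}y^{-1}xy$ (the same identity $[x,yz]=[x,z][x,y][x,y,z]$ is used in the proof of Lemma 2.1), so your argument would serve as a self-contained replacement for the external citation.
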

We arrive to the key lemma in the proof of Theorem 1.2. This lemma 
shows that when $G$ is metabelian, any element 
$h\in [{\rm P}(G),{\rm P}(G)]$ operates trivially on $[G,G]$
and on $G/[G,G]$.
\begin{lem} Let $G$ be a metabelian group. Suppose that $h$ is an 
element of the derived subgroup $[{\rm P}(G),{\rm P}(G)]$. Then\\
{\rm (i)} $h(t)=t$ for all $t\in [G,G]$;\\ 
{\rm (ii)} $x^{-1}h(x)$ belongs to $[G,G]$ for all $x\in G$.
\end{lem}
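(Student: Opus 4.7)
The plan is to prove (ii) first, as the ``abelianization'' analog is easier, and then attack (i) by a parallel argument on $[G,G]$ itself. In both cases the strategy is to exhibit a homomorphism from ${\rm P}(G)$ to some automorphism group whose image is abelian, forcing $[{\rm P}(G),{\rm P}(G)]$ into the kernel.

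For (ii), I would consider the homomorphism ${\rm P}(G) \to {\rm A}(G/[G,G])$ sending each automorphism to the one it induces on the abelianization. For a polynomial automorphism $f(x)=(v_{1}^{-1}x^{\epsilon_{1}}v_{1})\cdots(v_{m}^{-1}x^{\epsilon_{m}}v_{m})$, each conjugate $v_{i}^{-1}x^{\epsilon_{i}}v_{i}$ is congruent to $x^{\epsilon_{i}}$ modulo $[G,G]$, so $f$ induces the power map $\bar{x}\mapsto\bar{x}^{\epsilon}$ with $\epsilon=\epsilon_{1}+\cdots+\epsilon_{m}$. Power maps on the abelian group $G/[G,G]$ commute pairwise, so the image of ${\rm P}(G)$ is abelian and $[{\rm P}(G),{\rm P}(G)]$ lies in the kernel. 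This gives (ii).

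For (i), I would look at the restriction homomorphism $\rho:{\rm P}(G)\to{\rm A}([G,G])$, which makes sense because $[G,G]$ is characteristic in $G$. It suffices to show that $\rho(f)$ and $\rho(g)$ commute for any two polynomial automorphisms $f$ and $g$, since these generate ${\rm P}(G)$. Fix $t\in[G,G]$; because $G$ is metabelian, $[G,G]$ is abelian, so any two conjugates of $t$ commute and Lemma 2.1 applies to compute both $f(g(t))$ and $g(f(t))$ as explicit products.

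The critical observation is that the two expansions supplied by Lemma 2.1 share the ``monomial'' factors $t^{\epsilon_{i}\eta_{j}}$ and the two-fold commutators $[t^{\epsilon_{i}\eta_{j}},v_{i}]$, $[t^{\epsilon_{i}\eta_{j}},w_{j}]$, and differ only in the triple commutators $[t^{\epsilon_{i}\eta_{j}},w_{j},v_{i}]$ versus $[t^{\epsilon_{i}\eta_{j}},v_{i},w_{j}]$. Since $t^{\epsilon_{i}\eta_{j}}\in[G,G]$, Lemma 2.2 identifies these two. All factors live in the abelian group $[G,G]$ and may be reordered at will, so the two products coincide and $f(g(t))=g(f(t))$. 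Hence $\rho({\rm P}(G))$ is abelian, so $[{\rm P}(G),{\rm P}(G)]$ acts trivially on $[G,G]$, proving (i). The main obstacle is purely the bookkeeping of matching the two Lemma 2.1 expansions term by term after invoking Lemma 2.2; everything else follows because metabelianness simultaneously supplies the commuting-conjugates hypothesis of Lemma 2.1 (for $t\in[G,G]$) and the commutator identity of Lemma 2.2.
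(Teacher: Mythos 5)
Your proposal is correct and follows essentially the same route as the paper: for (i) the restriction homomorphism to ${\rm A}([G,G])$ combined with Lemma 2.1 and Lemma 2.2 to show the generators' images commute, and for (ii) the induced-automorphism homomorphism to ${\rm A}(G/[G,G])$ with an abelian image. The only cosmetic difference is that in (ii) you note directly that the induced maps are power maps, where the paper cites its Corollary 2.1; and your indexing of the triple commutators in (i) is in fact cleaner than the paper's (which contains an apparent typo $[t^{\epsilon_i\eta_j},v_j,w_i]$).
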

\begin{proof} {\em (i)} Consider the homomorphism 
$\Phi:{\rm P}(G)\to {\rm A}([G,G])$ defined like this: for any 
$f\in{\rm P}(G)$, $\Phi(f)$ is the restriction of $f$ to $[G,G]$.
We must show that $\ker\Phi$ contains $[{\rm P}(G),{\rm P}(G)]$.
For that, first notice that any two conjugates of $t\in [G,G]$ 
commute since $G$ is metabelian. Now we apply Lemma 2.1.
If $f$ and $g$ are polynomial 
automorphisms of $G$ defined as in this lemma, we obtain the 
equalities
\begin{eqnarray*}
f(g(t))	& = & \prod_{i=1}^{m}\prod_{j=1}^{n}t^{\epsilon_{i}\eta_{j}}
[t^{\epsilon_{i}\eta_{j}},v_{i}]
[t^{\epsilon_{i}\eta_{j}},w_{j}]
[t^{\epsilon_{i}\eta_{j}},w_{j},v_{i}],  \\
g(f(t)) & = & \prod_{i=1}^{m}\prod_{j=1}^{n}t^{\epsilon_{i}\eta_{j}}
[t^{\epsilon_{i}\eta_{j}},v_{i}]
[t^{\epsilon_{i}\eta_{j}},w_{j}]
[t^{\epsilon_{i}\eta_{j}},v_{j},w_{i}],
\end{eqnarray*}
and so, by Lemma 2.2, $f(g(t))=g(f(t))$ for all  $t\in [G,G]$.
It follows that $[f,g]$ belongs to $\ker\Phi$. In other words, the images 
of $f$ and $g$ in ${\rm P}(G)/\ker\Phi$ commute. Since ${\rm P}(G)/\ker\Phi$ is 
generated by the images of the polynomial automorphisms, this quotient 
is abelian. It follows that $\ker\Phi$ contains $[{\rm P}(G),{\rm P}(G)]$, as 
desired.\\
{\em (ii)} Here, we consider the homomorphism 
$\Psi:{\rm P}(G)\to {\rm A}(G/[G,G])$, where for any $f\in {\rm P}(G)$, 
$\Psi(f)$ is the automorphism induced in $G/[G,G]$ by $f$.
Since a polynomial automorphism of $G$ induces in $G/[G,G]$ a polynomial automorphism of
$G/[G,G]$, $\Psi({\rm P}(G))$ is a subgroup of ${\rm P}(G/[G,G])$.
But ${\rm P}(G/[G,G])$ is abelian (see for instance Corollary 2.1 above) and  
$\Psi({\rm P}(G))$ is isomorphic to ${\rm P}(G)/\ker\Psi$. 
Hence ${\rm P}(G)/\ker\Psi$ is abelian. Consequently,
$\ker\Psi$ contains $[{\rm P}(G),{\rm P}(G)]$ 
and the result follows.
\end{proof}
%
\begin{proof}[Proof of Theorem 1.2.] Let $f,g$ be two elements of $[{\rm P}(G),{\rm P}(G)]$.
For any $x\in G$, put $v(x)=x^{-1}f(x)$ and $w(x)=x^{-1}g(x)$. 
By Lemma 2.3, $v(x)$ and  $w(x)$ belong to $[G,G]$.
Applying again Lemma 2.3, we can write
$$f(g(x)=f(xw(x))=f(x)f(w(x))=xv(x)w(x).$$
In the same way, we have $g(f(x))=xw(x)v(x)=xv(x)w(x)$.
It follows that $f$ and $g$ commute. Thus $[{\rm P}(G),{\rm P}(G)]$ 
is abelian, and so ${\rm P}(G)$ is metabelian.  
\end{proof} 
%
\section{IA-automorphisms of two-generator metabelian groups}
By way of illustration, we apply Theorem 1.2 to IA-automorphisms of a
two-generator metabelian group. We recall that an automorphism of a group $G$ is said to be an 
{\em IA-automorphism} if it induces the identity automorphism on $G/[G,G]$.
In a free metabelian group of rank 2, each IA-automorphism is inner
\cite{BA}, and so is a polynomial automorphism. It turns out that in 
any two-generator metabelian group, each IA-automorphism is polynomial.
This result is implicit in \cite{CA} with a different 
terminology. For convenience, we give a proof since this one is 
short and elementary.
%
\begin{prp} Each IA-automorphism
of a two-generator metabelian group is polynomial.
\end{prp}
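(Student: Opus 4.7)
The plan is to construct an explicit polynomial formula representing $f$. Write $f(a)=au$ and $f(b)=bv$ with $u,v\in N:=[G,G]$; the task is to find integers $\epsilon_1,\ldots,\epsilon_m$ and elements $w_1,\ldots,w_m\in G$ such that $f(x)=\prod_i w_i^{-1}x^{\epsilon_i}w_i$ for all $x\in G$.

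First, I would exploit the two-generator metabelian structure. Since $N$ is abelian and normal in $G$, it is a module over the group ring $\mathbb{Z}[G/N]$. Because $G=\langle a,b\rangle$, repeated application of the standard identities $[xy,z]=[x,z]^y[y,z]$ and $[x,yz]=[x,z][x,y]^z$ shows that this module is cyclic, generated by $c:=[a,b]$. Consequently, $u$ and $v$ are each expressible as a product of conjugates of $c^{\pm 1}$, which gives an explicit grip on their structure.

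Second, I would seek a polynomial expression $p(x)=\prod_i w_i^{-1}x^{\epsilon_i}w_i$ with $\sum_i\epsilon_i=1$ (the condition ensuring $p$ acts as the identity modulo $N$) such that $p(a)=au$ and $p(b)=bv$. The basic building blocks are the inner automorphisms $x\mapsto w^{-1}xw$, which produce pairs of the form $([a,w],[b,w])$; more general factors $w^{-1}x^{\epsilon}w$ provide additional flexibility. Applying the commutator identities above, together with the metabelian identity $[c,x,y]=[c,y,x]$ for $c\in N$ (Lemma~2.2), one can expand $p(a)$ and $p(b)$ into products of conjugates of $c$, and then match them against the expressions for $u$ and $v$ from the first step by an appropriate choice of the $w_i$'s and $\epsilon_i$'s.

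Finally, I would conclude by verifying that $p=f$ on all of $G$: the constructed $p$ is a group endomorphism (being a polynomial automorphism), and it agrees with the endomorphism $f$ on the generating set $\{a,b\}$, so since $G=\langle a,b\rangle$ the two must coincide on $G$, whence $f\in{\rm P}_0(G)$.

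The main obstacle is the construction in the second step. A polynomial expression is a \emph{single} formula: the same sequence of $w_i$'s and $\epsilon_i$'s is used whether we evaluate at $x=a$ or at $x=b$, so the pair $(u,v)=(a^{-1}p(a),b^{-1}p(b))$ must satisfy strong algebraic coupling constraints imposed by the formula. The heart of the proof is to show that the two-generator hypothesis --- concretely, the cyclicity of $N$ as a $\mathbb{Z}[G/N]$-module on the single generator $[a,b]$ --- makes this coupling loose enough to realise every $(u,v)$ coming from an IA-automorphism. (Note that this is where the assumption genuinely enters: in higher rank $N$ is no longer cyclic as a module, and the corresponding construction breaks down.)
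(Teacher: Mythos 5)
Your overall strategy is the same as the paper's: write $f(a)=au$, $f(b)=bv$, use the fact that $[G,G]$ is the normal closure of $[a,b]$ to express $u$ and $v$ in terms of $[a,b]$ and its conjugates, build one explicit polynomial formula, and conclude by checking agreement on the two generators. But the proposal stops exactly where the proof has to happen: you name the coupling problem (one formula must simultaneously produce $u$ at $x=a$ and $v$ at $x=b$) and then assert that the two-generator hypothesis ``makes this coupling loose enough,'' without exhibiting the mechanism. The paper's mechanism is concrete and is the whole point of the argument: after rewriting $u$ and $v$ via the identity $[x,y,z]=[x,y]^{-1}[x,z]^{-1}[x,yz]$ as products of commutators $[a,\cdot]$, one inserts into the candidate map $\varphi(x)=x[x,b]^{\alpha-\lambda}[x,a]^{\mu-\beta}\prod_i[x,v_i]^{-\lambda_i}[x,bv_i]^{\lambda_i}[x,w_i]^{\mu_i}[x,aw_i]^{-\mu_i}$ blocks of the form $[x,w]^{\mu}[x,aw]^{-\mu}$, which vanish at $x=a$ because $[a,w]=[a,aw]$, and blocks $[x,v]^{-\lambda}[x,bv]^{\lambda}$, which vanish at $x=b$ because $[b,v]=[b,bv]$; the identity $[a,w]^{-1}[a,bw]=[b,aw]^{-1}[b,w]$ then converts the surviving terms at $x=b$ into the required expression for $v$. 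This decoupling device is the missing idea; without it your ``match the coefficients'' step is only a hope.

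There is a second, smaller gap. To conclude $p=f$ from agreement on $\{a,b\}$ you need $p$ to be a homomorphism, and you justify this by saying $p$ is ``a polynomial automorphism'' --- but a polynomial expression $\prod_i w_i^{-1}x^{\epsilon_i}w_i$ is a priori just a function, not an endomorphism, so the justification is circular. The paper avoids this by choosing the candidate in the special shape $x\mapsto x[x,v_1]^{\eta_1}\cdots[x,v_n]^{\eta_n}$ and proving separately (Lemma 3.1) that every map of this shape is an endomorphism of a metabelian group; only after verifying $\varphi=f$ does one know $\varphi$ is an automorphism, and it is polynomial because each factor $x^{-1}(v^{-1}xv)$ is. You should either adopt that normal form or supply an independent argument that your constructed $p$ is multiplicative.
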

To prove this proposition, we shall use the following result.
\begin{lem} In a metabelian group $G$, each  function $\varphi$ of the form
$$x\mapsto \varphi(x)=x[x,v_{1}]^{\eta_{1}}\ldots[x,v_{n}]^{\eta_{n}}\;\;(v_{i}\in G,\:
\eta_{i}\in{\mathbb Z})$$
is an endomorphism.
\end{lem}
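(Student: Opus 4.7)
The plan is to show $\varphi(xy) = \varphi(x)\varphi(y)$ by a direct computation, using in an essential way the fact that the derived subgroup $[G,G]$ is abelian.

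First I would set $c(x) = [x,v_1]^{\eta_1}\cdots [x,v_n]^{\eta_n}$, so that $\varphi(x) = x\,c(x)$, and observe that $c(x) \in [G,G]$. Expanding the two sides gives
$$\varphi(xy) = xy\cdot c(xy), \qquad \varphi(x)\varphi(y) = x\,c(x)\,y\,c(y) = xy\cdot c(x)^{y}c(y),$$
so the task reduces to proving the identity $c(xy) = c(x)^{y}\,c(y)$ in $[G,G]$.

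Next I would apply the standard commutator formula $[xy,v] = [x,v]^{y}[y,v]$ to each factor $[xy,v_i]^{\eta_i}$. Since $G$ is metabelian, $[G,G]$ is abelian, so the two factors $[x,v_i]^{y}$ and $[y,v_i]$ commute, and raising their product to the power $\eta_i$ distributes: $[xy,v_i]^{\eta_i} = \bigl([x,v_i]^{\eta_i}\bigr)^{y}\,[y,v_i]^{\eta_i}$. Multiplying over $i$, and again freely rearranging factors inside the abelian group $[G,G]$, I collect all the conjugated terms first and all the unconjugated terms afterwards, which yields exactly $c(x)^{y}\,c(y)$.

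There is no real obstacle here: once one writes $\varphi(x) = x\,c(x)$ and notes that $c(x)$ lies in the abelian group $[G,G]$, everything reduces to the standard expansion of $[xy,v]$ together with the fact that abelian-group arithmetic lets powers distribute over products and lets factors be reordered. Lemma 2.2 is not needed here; the metabelian hypothesis enters only through the commutativity of $[G,G]$.
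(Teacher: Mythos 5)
Your proof is correct and is essentially the paper's own argument: both reduce the claim to the identity $[xy,v]=y^{-1}[x,v]y\,[y,v]$ and then use the abelianness of $[G,G]$ to distribute the exponents $\eta_i$ and reorder the factors. Your reformulation via $c(x)$ and the identity $c(xy)=c(x)^{y}c(y)$ is only a notational repackaging of the same computation.
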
 
\begin{proof} Thanks to the relation $[xy,z]=y^{-1}[x,z]y[y,z]$, we get
$$ \varphi(xy)=xy\prod_{i=1}^n\left(y^{-1}[x,v_i]y[y,v_i]\right)^{\eta_i}.$$
But since the derived subgroup of $G$ is abelian, we can write
\begin{eqnarray*}
\varphi(xy) & = & xy\prod_{i=1}^{n}\left( y^{-1}[x,v_i]^{\eta_i}y\right) \prod_{i=1}^{n}[y,v_i]^{\eta_i} \\
{} & = & xy\left(  y^{-1} \left(\prod_{i=1}^{n} [x,v_i]^{\eta_i}\right) y\right) \prod_{i=1}^{n}[y,v_i]^{\eta_i}\\
{} & = & \varphi(x)\varphi(y),\\
\end{eqnarray*}
as required.
\end{proof}
\begin{proof}[Proof of Proposition 3.1.]  
Suppose that $G$ is a two-generator metabelian group
generated by $a$ and $b$.
If $f$ is an IA-automorphism of $G$, we have $f(a)=av$ and $f(b)=bw$, 
where $v$ and $w$ belong to the derived subgroup $[G,G]$.
Now notice that $[G,G]$ is the normal closure of $[a,b]$. 
Therefore, $[G,G]$ is generated by $[a,b]$ and the elements of 
the form $[a,b,u]$, with $u\in G$.  Hence $v$ 
and $w$ can be written in the form
\begin{eqnarray*}
	v & = & [a,b]^{\alpha}\prod_{i=1}^{n}
	[a,b,v_{i}]^{\lambda_{i}},   \\
	w & = & [a,b]^{\beta}\prod_{i=1}^{n}
	[a,b,w_{i}]^{\mu_{i}},
\end{eqnarray*}  
where 
$\alpha,\beta,\lambda_{1},\ldots,\lambda_{n},\mu_{1},\ldots,\mu_{n}$ are 
integers (possibly equal to 0).
By using the relation $[x,y,z]=[x,y]^{-1}[x,z]^{-1}[x,yz]$,
we obtain 
\begin{eqnarray*}
	v & = & [a,b]^{\alpha-\lambda}\prod_{i=1}^{n}
	[a,v_{i}]^{-\lambda_{i}}[a,bv_{i}]^{\lambda_{i}},   \\
	w & = & [a,b]^{\beta-\mu}\prod_{i=1}^{n}
	[a,w_{i}]^{-\mu_{i}}[a,bw_{i}]^{\mu_{i}},
\end{eqnarray*}  
where $\lambda=\lambda_{1}+\cdots+\lambda_{n}$ and $\mu=\mu_{1}+\cdots+\mu_{n}$.
Now put
$$\varphi(x)=x[x,b]^{\alpha -\lambda}[x,a]^{\mu-\beta}
\prod_{i=1}^{n}[x,v_{i}]^{-\lambda_{i}}[x,bv_{i}]^{\lambda_{i}}
[x,w_{i}]^{\mu_{i}}[x,aw_{i}]^{-\mu_{i}}.$$ 
By Lemma 3.1, $\varphi$ is an 
endomorphism of $G$. Moreover, we have
$$\varphi(a)=a[a,b]^{\alpha -\lambda}
\prod_{i=1}^{n}[a,v_{i}]^{-\lambda_{i}}[a,bv_{i}]^{\lambda_{i}}
=av=f(a)$$
since $[a,w_{i}]=[a,aw_{i}]$. 

In the same way, we get
$$\varphi(b)=b[a,b]^{\beta-\mu}
\prod_{i=1}^{n}[b,w_{i}]^{\mu_{i}}[b,aw_{i}]^{-\mu_{i}}.$$
By using the identity
$[a,w_{i}]^{-1}[a,bw_{i}]=[b,aw_{i}]^{-1}[b,w_{i}]$ (valid in any 
group), we obtain
$$\varphi(b)=b[a,b]^{\beta-\mu}
\prod_{i=1}^{n}[a,w_{i}]^{-\mu_{i}}[a,bw_{i}]^{\mu_{i}}=bw=f(b).$$
Thus $f=\varphi$ and the proof is complete. 
\end{proof}
We remark that Proposition 3.1 cannot be extended to 
three-generator metabelian groups.
For example, in the free metabelian group of rank 3 
freely generated by $a,b,c$, consider the 
IA-automorphism $f$ defined  by 
$f(a)=a$, $f(b)=b$ and $f(c)=c[a,b]$.
Suppose that $f$ is polynomial.
Since $[a,b]=c^{-1}f(c)$, the commutator $[a,b]$
would be in the normal closure of $c$, hence would be
a product of conjugates of $c^{\pm 1}$. Substituting 1 for $c$ in 
this expression gives then $[a,b]=1$, a contradiction. Therefore 
$f$ is an IA-automorphism which is not polynomial. 

As a consequence of Theorem 1.2 and Proposition 3.1, we obtain
an alternative proof of a result due to
C. K. Gupta \cite{GU} (see also \cite{CM}). 
%
\begin{cor}[\cite{GU}] In a two-generator metabelian group, 
the group of IA-automorphisms is metabelian.
\end{cor}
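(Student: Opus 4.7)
The plan is to combine Proposition 3.1 with Theorem 1.2 in essentially one step. Let $G$ be a two-generator metabelian group and let ${\rm IA}(G)$ denote its group of IA-automorphisms. I would first observe that by Proposition 3.1, every element of ${\rm IA}(G)$ is a polynomial automorphism, so ${\rm IA}(G) \subseteq {\rm P}_0(G) \subseteq {\rm P}(G)$.

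Next, I would apply Theorem 1.2 to conclude that ${\rm P}(G)$ is metabelian, since $G$ itself is metabelian. Finally, since the class of metabelian groups is closed under taking subgroups, the inclusion ${\rm IA}(G) \leq {\rm P}(G)$ immediately yields that ${\rm IA}(G)$ is metabelian.

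There is essentially no obstacle here: the corollary is a formal consequence of the two main results already established. The only point worth double-checking is that the definition of polynomial automorphism does make ${\rm IA}(G)$ a subset (not just each IA-automorphism individually an element of ${\rm P}(G)$), but this is immediate since ${\rm P}_0(G) \subseteq {\rm P}(G)$ by definition of ${\rm P}(G)$ as the subgroup generated by ${\rm P}_0(G)$. Thus the proof will be only two or three lines.
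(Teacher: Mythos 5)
Your proof is correct and is exactly the argument the paper intends: Proposition 3.1 places ${\rm IA}(G)$ inside ${\rm P}(G)$, Theorem 1.2 makes ${\rm P}(G)$ metabelian, and the subgroup-closure of metabelian groups finishes it. No issues.
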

Let $M_{d}$ denote the free metabelian group of rank 
$d$. By a result of Bachmuth \cite{BA}, if $d\geq 3$, the group of IA-automorphisms 
of $M_{d}$ contains a subgroup which is (absolutely) free of rank $d$. Thus Corollary 
3.1 fails in a $d$-generator metabelian group when $d\geq 3$. Also Bachmuth's result
shows once again that the group of IA-automorphisms 
of $M_{d}$ is not included in ${\rm P}(M_{d})$ (if $d\geq 3$), since 
${\rm P}(M_{d})$ is metabelian.

In conclusion we mention that the metabelian groups constitute an 
important source of polynomial endomorphisms and automorphisms.
Indeed, by Lemma 3.1, each  function of the form
$$x\mapsto x[x,v_{1}]^{\eta_{1}}\ldots[x,v_{n}]^{\eta_{n}}\;\;(v_{i}\in G,\:
\eta_{i}\in{\mathbb Z})$$
is an endomorphism in a metabelian group $G$. Besides, when $G$ is metabelian 
and nilpotent, such an endomorphism is an automorphism since in a 
nilpotent group, every function of the form
$$x\mapsto u_{0}x^{\epsilon_{1}}u_{1}\ldots 
u_{m-1}x^{\epsilon_{m}}u_{m}\;\;(u_{i}\in G,\:
\epsilon_{i}\in{\mathbb Z})$$
is a bijection if $\epsilon_{1}+\cdots+\epsilon_{m}=\pm 1$
(see \cite[Theorem 1]{EN}).
 %
%

%

\begin{thebibliography}{10}
%
\bibitem{BA} S. Bachmuth,
Automorphisms of free metabelian groups,
{\em Trans. Amer. Math. Soc.} {\bf 118} (1965), 93--104.
%
\bibitem{CA} A. Caranti and C. M. Scoppola,
Endomorphisms of two-generated metabelian groups that induce the 
identity modulo the derived subgroup,
{\em Arch. Math.} {\bf 56} (1991), 218--227.
%
\bibitem{CM} F. Catino and M. M. Miccoli,
A note on IA-endomorphisms of two-generated metabelian groups,
{\em Rend. Sem. Mat. Univ. Padova} {\bf 96} (1996), 99--104.
%
\bibitem{CR} G. Corsi Tani and M. F. Rinaldi Bonafede, Polynomial 
automorphisms in nilpotent finite groups, {\em Boll. U.M.I.} {\bf 5} 
(1986), 285--292.
%
\bibitem{EN} G. Endimioni, Applications rationnelles d'un groupe nilpotent, 
{\em C. R. Acad. Sci. Paris} {\bf 314} (1992), 431--434.
%
\bibitem{GU} C. K. Gupta,
IA-automorphisms of two-generator metabelian groups,
{\em Arch. Math.} {\bf 37} (1981), 106--112.
%
\bibitem{KO} G. Kowol, Polynomautomorphismen von 
Gruppen, {\em Arch. Math.} {\bf 57} (1991), 114--121. 
%
\bibitem{NE} H. Neumann, {\em Varieties of Groups}, Springer-Verlag, Berlin 
(1967). 
%
\bibitem{RO} D. J. S. Robinson, {\em Finiteness Conditions and
Generalized Soluble Groups}, Springer-Verlag, Berlin (1972).
%
\bibitem{SC} D. Schweigert, Polynomautomorphismen auf endlichen 
Gruppen, {\em Arch. Math.} {\bf 29} (1977), 34--38.
%
\end{thebibliography}
\end{document}